\documentclass[preprint,12pt]{elsarticle}
\usepackage[T1]{fontenc}
\usepackage{amssymb,amsmath,graphicx,color,amsthm}

\addtolength{\textwidth}{30mm}
\addtolength{\hoffset}{-15mm}
\addtolength{\textheight}{30mm}
\addtolength{\voffset}{-15mm}

\newtheorem{theorem}{Theorem}

\newtheorem{conjecture}{Conjecture}

\renewcommand{\int}{{\rm int}}

\begin{document}

\begin{frontmatter}

\title{Improved bounds for some facially constrained colorings}

\author[fis,famnit]{Kenny \v{S}torgel\fnref{arrs}}
\ead{kennystorgel.research@gmail.com}

\fntext[mr]{This research was funded by a Young Researchers Grant from the Slovenian Research Agency.}

\address[fis]{Faculty of Information Studies, Novo mesto, Slovenia}
\address[famnit]{University of Primorska, FAMNIT, Koper, Slovenia}

\begin{abstract}
	A facial-parity edge-coloring of a $2$-edge-connected plane graph is a facially-proper edge-coloring in which every face is incident with zero or an odd number of edges of each color. A facial-parity vertex-coloring of a $2$-connected plane graph is a facially-proper vertex-coloring in which every face is incident with zero or an odd number of vertices of each color. 
	Czap and Jendro\v{l} (in \textit{Facially-constrained colorings of plane graphs: A survey, Discrete Math. 340 (2017), 2691--2703}), 
	conjectured that $10$ colors suffice in both colorings. We present an infinite family of counterexamples to both conjectures.
	
	A facial $(P_{k}, P_{\ell})$-WORM coloring of a plane graph $G$ is a coloring of the vertices 
	such that $G$ contains no rainbow facial $k$-path and no monochromatic facial $\ell$-path. 
	Czap, Jendro\v{l} and Valiska (in \textit{WORM colorings of planar graphs, Discuss. Math. Graph Theory 37 (2017), 353--368}), 
	proved that for any integer $n\ge 12$ there exists a connected plane graph on $n$ vertices, with maximum degree at least $6$, 
	having no facial $(P_{3},P_{3})$-WORM coloring. 
	They also asked if there exists a graph with maximum degree $4$ having the same property. 
	We prove that for any integer $n\ge 18$, there exists a connected plane graph, with maximum degree $4$, with no facial $(P_{3},P_{3})$-WORM coloring. 
\end{abstract}

\begin{keyword}
plane graph \sep facial coloring \sep facial-parity edge-coloring \sep facial-parity vertex-coloring \sep WORM coloring
\end{keyword}

\end{frontmatter}

\section{Introduction}



Historically, the Four Color Problem became a great motivation for researchers to study many different types of colorings restricted to planar graphs. Among some of the recent studies are those that study colorings of plane graphs where certain constraints are given by the faces. Czap and Jendro\v{l}~\cite{survey} wrote a survey devoted to presenting many of such colorings, in which they also presented a number of open problems. In this note, we consider three types of facially constrained colorings of plane graphs.\\
\indent All graphs considered are finite and planar with a fixed embedding, where $V$ denotes the set of vertices, $E$ denotes the set of edges and $F$ denotes the set of faces of $G$. Let $C$ be a set of colors. For simplicity we take $C = \mathbb{N}$, where each positive integer denotes a single color. A \textit{vertex-coloring} of a graph $G$ is a function $f: V(G)\mapsto \mathbb{N}$, assigning to each vertex of $G$ a color $c\in \mathbb{N}$. An \textit{edge-coloring} of a graph $G$ is a function $g: E(G)\mapsto \mathbb{N}$, assigning to each edge of $G$ a color $c\in \mathbb{N}$. We say that a vertex-coloring (edge-coloring) is \textit{proper} if every two adjacent vertices (edges) receive distinct colors. An edge-coloring is \textit{facially-proper} if for every face $\alpha\in F$, every pair of incident edges appearing consecutively on the boundary of the face $\alpha$ receives distinct colors.\\


\section{Facial-parity edge-coloring}

A \textit{facial-parity edge-coloring} of a $2$-edge-connected plane graph $G$, is a facially-proper coloring of the edges of $G$, where every face $\alpha$ of $G$ is incident with either odd or zero edges of each color. We denote with $\chi_{fp}'(G)$ the minimum number $k$, for which there exists a facial-parity edge-coloring of $G$ with $k$ colors. Facial-parity edge-coloring was first studied by Czap, Jendro\v{l} and Kardo\v{s} in~\cite{facialparityedge}, where they proved that $92$ colors suffice to color every $2$-edge-connected plane graph. This bound was later improved by Czap et al.~\cite{paritypseudo} to $20$ colors. The best known upper bound so far is $16$ colors, due to Lu\v{z}ar and \v{S}krekovski~\cite{parity16}. In~\cite{parityouterplane}, an example of an outerplane graph is presented, namely two cycles $C_{5}$ sharing a single vertex, which needs $10$ colors. Later Czap and Jendro\v{l}~\citep{survey} proposed the following conjecture.

\begin{conjecture}
	If $G$ is a $2$-edge-connected plane graph, then $\chi_{fp}'(G)\le 10$.
\end{conjecture}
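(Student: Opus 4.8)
The plan is to establish the bound by contradiction through the discharging method, the standard engine behind the successive improvements from $92$ to $16$ colors. Suppose the statement fails, and let $G$ be a $2$-edge-connected plane graph with $\chi_{fp}'(G)>10$ that is minimal with respect to the number of edges (and, among those, the number of vertices). The goal is to exhibit an unavoidable set of \emph{reducible configurations}: local substructures whose presence would let us color a strictly smaller $2$-edge-connected plane graph $G'$ with $10$ colors and then extend that coloring to all of $G$, contradicting minimality.

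First I would catalogue the reductions. The natural candidates are a vertex of degree $2$ (suppress it to get $G'$, color $G'$, then reinstate the two edges with a suitable pair of colors), short faces (triangles and quadrilaterals, handled by deleting or contracting a suitable boundary edge), and configurations of adjacent low-degree vertices. For each, one forms $G'$, applies the minimality hypothesis to obtain a $10$-coloring, and extends. The extension must simultaneously restore (i) facial-properness on the two or three faces touched by the reduction and (ii) the parity condition, namely that every affected face still meets each color in zero or an odd number of edges. With a palette of $10$ colors and only a bounded number of constrained edges per configuration, a finite case analysis should locate a valid extension.

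Next I would run the discharging argument. Assign to each vertex $v$ the charge $\deg(v)-4$ and to each face $\alpha$ the charge $\deg(\alpha)-4$; by Euler's formula the total charge is $-8<0$. Designing local discharging rules that move charge from faces and high-degree vertices toward the deficient elements (degree-$2$ vertices, triangles, quadrilaterals), I would show that if none of the reducible configurations occurs then every element ends with nonnegative charge, contradicting the negative sum. Hence $G$ must contain a reducible configuration, and this contradiction completes the proof.

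The hard part will be the reducibility step, not the discharging. Unlike ordinary proper colorings, deleting, contracting, or suppressing in a parity coloring changes the \emph{parity} of the lengths and of the color-multiplicities of the incident faces, so a legal coloring of $G'$ need not extend: fixing the parity on one face can destroy it on a neighbour. The decisive technical device will therefore be reductions that flip parities in \emph{pairs} (for instance, deleting or rerouting two edges at once, or recoloring along a face-bounded alternating path) so that the parity budget of every affected face is preserved, all while staying within $10$ colors. Whether a set of such parity-respecting reductions is genuinely unavoidable is exactly where the argument is most delicate, and where the margin between the known bound of $16$ and the target of $10$ is thinnest.
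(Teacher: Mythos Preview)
Your proposal attempts to \emph{prove} the conjecture, but the paper \emph{refutes} it: Theorem~\ref{FPedge} exhibits the Theta graph $\Theta_{4,4,4}$ (three internally disjoint paths of length $4$ between two vertices) and shows that $\chi_{fp}'(\Theta_{4,4,4})=12$. So the statement you are trying to establish is false, and no discharging argument can succeed.

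It is instructive to see exactly where your outline breaks. You list a vertex of degree $2$ as a reducible configuration: suppress it, color the smaller graph, then reinstate the two edges. In $\Theta_{4,4,4}$ every internal vertex has degree $2$, so by your scheme any of them should be reducible. But the parity constraint is global across the three faces in a way that blocks the extension. The paper's key observation is that if a color $c$ appears on two of the three paths $P_1,P_2,P_3$, then on each path it must appear with some parity, and whichever parities occur, one of the three faces $\alpha_{12},\alpha_{13},\alpha_{23}$ ends up seeing $c$ an even nonzero number of times. Hence the three paths use pairwise disjoint color sets, and a path of length $4$ needs four colors (one cannot split $4$ as a sum of odd numbers using only two values while keeping the coloring facially proper). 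Three paths times four colors gives $12$. Your ``reinstate the two edges with a suitable pair of colors'' step would have to find colors that simultaneously fix the parity on \emph{both} incident faces without ever leaking a color onto a second path, and with only $10$ colors available this is impossible for $\Theta_{4,4,4}$. The delicate point you yourself flag in the last paragraph --- that parity can be broken on a neighbouring face --- is not a technical nuisance to be engineered around; in this instance it is a genuine obstruction.
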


The \textit{Theta graph}, $\Theta_{i,j,k}$, is the graph consisting of two distinct vertices joined by three internally vertex disjoint paths of lengths $i$, $j$, and $k$. 
We present the following result, which rejects the conjecture and shows that the lower bound is at least $12$.
\begin{figure}[h!]
\center
\includegraphics[scale=0.8]{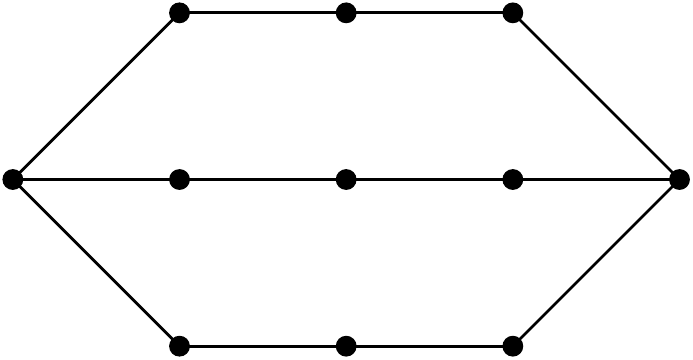}
\caption{The graph $\Theta_{4,4,4}$ with $12$ edges and $\chi_{fp}'(G) = 12$.}\label{ThetaGraph}
\end{figure}

\begin{theorem}\label{FPedge}
For any integer $k\ge 3$, there exists a $2$-edge-connected plane graph $G$ with $4k$ edges and $\chi_{fp}'(G) = 12$.
\end{theorem}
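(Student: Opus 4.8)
The plan is to take, for each $k\ge 3$, the theta graph $G=\Theta_{4,4,4k-8}$, with branch vertices $u,v$ joined by internally disjoint paths $P_1,P_2$ of length $4$ and $P_3$ of length $4k-8$ (so $4k-8\ge 4$); then $|E(G)|=4+4+(4k-8)=4k$, $G$ is $2$-edge-connected and plane, and by Euler's formula it has exactly three faces, each bounded by the union of two of the three paths. For $k=3$ this is the graph $\Theta_{4,4,4}$ of Figure~\ref{ThetaGraph}. I would first prove the upper bound by an explicit colouring: on a path with edges $e_1,\dots,e_\ell$, $\ell\equiv 0\pmod 4$, colour $e_1,e_3,\dots,e_{\ell-3}$ with one colour, $e_2,e_4,\dots,e_{\ell-2}$ with a second, and give $e_{\ell-1}$ and $e_\ell$ two further private colours; this partitions the edge set into four matchings of the odd sizes $\tfrac{\ell}{2}-1,\tfrac{\ell}{2}-1,1,1$. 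Applying this to $P_1,P_2,P_3$ with pairwise disjoint palettes of size $4$ uses $12$ colours; the result is facially proper (within each $P_i$ the colour classes are matchings, and at $u$ and $v$ the two edges lying on a common face belong to different palettes), and every face is incident with each colour of the two palettes of its bounding paths an odd number of times and with every other colour zero times. Hence $\chi_{fp}'(G)\le 12$.

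For the lower bound the crucial step is that in every facial-parity edge-colouring of $G$, each colour class lies in a single path $P_i$. To see this, fix a colour and let $S$ be its (nonempty) class; since each edge lies on exactly two faces, $\sum_{\alpha\in F}|\partial\alpha\cap S|=2|S|$, and as each summand is odd or zero while $G$ has three faces, an even number of faces meet $S$, hence exactly two of them (not zero, since every edge lies on a face). So $S$ avoids one face entirely, and as that face is bounded by two of the three paths, $S$ is contained in the remaining path $P_i$. Two edges of $S$ cannot share a vertex, for they would be consecutive on a face boundary, contradicting facial-properness; and $|S|$ equals the number of class-edges incident with either face containing $P_i$, which is nonzero and therefore odd. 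Thus $S$ is an odd matching in $P_i$, colours do not repeat between distinct paths, and $\chi_{fp}'(G)$ equals the sum over $i$ of the least number of odd matchings needed to partition $E(P_i)$.

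Finally, for a path with $\ell$ edges and $\ell\equiv 0\pmod 4$ (here $\ell\in\{4,\,4k-8\}$) this least number is $4$: a partition of a path's edges into two matchings is forced to be its essentially unique proper $2$-edge-colouring, whose two parts have the equal \emph{even} size $\ell/2$, so two odd matchings do not suffice; three odd parts cannot sum to the even number $\ell$; and four odd matchings are achieved by the colouring above. Summing over $P_1,P_2,P_3$ gives $\chi_{fp}'(G)=4+4+4=12$, as required. The only genuinely non-routine point is the confinement of colour classes to single paths; after that the argument is elementary, the sole remaining care being the verification of facial-properness at $u$ and $v$ and of the ``zero'' parity at each face for the palette of the opposite path.
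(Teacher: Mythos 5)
Your proof is correct and follows essentially the same route as the paper: decompose the theta graph into its three paths, show by a parity argument over the three faces that every colour class is an odd matching confined to a single path, and then conclude that a path of length divisible by $4$ requires exactly four such classes. Your double-counting derivation of the confinement lemma and your explicit ruling out of two and three odd matchings per path are, if anything, slightly more complete than the paper's write-up, which argues confinement by a local case analysis and leaves the per-path lower bound implicit.
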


\begin{proof}
Let $G$ be any Theta graph. Fix some plane embedding of $G$ (e.g.~see Fig~\ref{ThetaGraph}). 
Clearly $G$ is $2$-edge-connected. Now note that $G$ can be edge decomposed into three internally vertex-disjoint paths $P_{1}$, $P_{2}$ and $P_{3}$, where $P_{i}$ and $P_{j}$, $1\le i<j\le 3$, are both incident to the unique face $\alpha_{ij}$. Let $f:E(G)\rightarrow \mathbb{N}$ be any facial-parity edge-coloring of $G$. First suppose that some color $c$ appears an even number of times on the edges of some path $P_{i}$. Without loss of generality, we can assume that $i=1$. Since $P_{1}$ is incident to both $\alpha_{12}$ and $\alpha_{13}$, it follows that the color $c$ must appear an odd number of times on the edges of both $P_{2}$ and $P_{3}$, but then it appears an even number of times on the edges of the face $\alpha_{23}$, a contradiction. It follows directly that no color can appear on two distinct paths $P_{i}$ and $P_{j}$ at the same time. Therefore, the number of colors needed to color the edges of $G$ is the sum of the number of colors needed to color the edges of each $P_{i}$ individually. Let us consider again a single path $P\in \{P_{1}, P_{2}, P_{3}\}$ and let the length of $P$ be $\ell$. We consider the following four cases:\\
\indent \textit{Case 1:} If $\ell = 2m$ for some $m\in \mathbb{N}$, where $m$ is odd, then we can properly color the edges of $P$ with exactly two colors $c_{1}$ and $c_{2}$, each appearing $m$ times on $P$.\\
\indent \textit{Case 2:} If $\ell = 2m+1$ for some $m\in \mathbb{N}$, where $m$ is even, then we can color the edges of $P$ with exactly three colors $c_{1}$, $c_{2}$ and $c_{3}$, where each of the colors $c_{1}$ and $c_{2}$ appears $m-1$ times on $P$ and the color $c_{3}$ appears $3$ times on $P$ (the exception is when $m=2$, in which case we use color $c_{1}$ $3$ times and each of the colors $c_{2}$ and $c_{3}$ only once).\\
\indent \textit{Case 3:} If $\ell = 2m+1$ for some $m\in \mathbb{N}$, where $m$ is odd, then we can color the edges of $P$ with exactly three colors $c_{1}$, $c_{2}$ and $c_{3}$, where each of the colors $c_{1}$ and $c_{2}$ appears $m$ times on $P$ and the color $c_{3}$ appears only once on $P$.\\
\indent \textit{Case 4:} If $\ell = 4m$ for some $m\in \mathbb{N}$, then we can color the edges of $P$ with exactly four colors $c_{1}$, $c_{2}$, $c_{3}$ and $c_{4}$, where each of the colors $c_{1}$ and $c_{2}$ appears $2m-1$ times on $P$ and each of the colors $c_{3}$ and $c_{4}$ appears only once on $P$.\\
It follows that if each of the paths $P_{i}$ has length divisible by $4$, then $\chi_{fp}'(G) = 12$, thus proving the theorem. The smallest such case is depicted in Fig.~\ref{ThetaGraph}, where all three paths are of length $4$ and $G$ has $12$ edges.
\end{proof}

\section{Facial-parity vertex-coloring}

A \textit{facial-parity vertex-coloring} of a plane graph $G$ is a facially-proper coloring of the vertices of $G$ in such a way, that every face $\alpha$ of $G$ is incident with either odd or zero vertices of each color. We denote by $\chi_{fp}(G)$ the minimum number $k$, for which there exists a facial-parity vertex-coloring of $G$ with $k$ colors. Czap, Jendro\v{l} and Voigt~\cite{vertexparity118} proved that $118$ colors are sufficient to color every $2$-connected plane graph $G$. Kaiser et al.~\cite{vertexparity97} improved the bound to $97$ colors, which is the best known bound so far. Czap~\cite{parityvertex10example} showed that there exists an outerplane graph which needs $10$ colors. In~\cite{wang2examplesfor10} the authors proved that there exist only two $2$-connected outerplane graphs with $\chi_{fp} = 10$. Motivated by that, Czap and Jendro\v{l}~\cite{survey} proposed the following.

\begin{conjecture}
Every $2$-connected plane graph admits a facial-parity vertex-coloring with at most $10$ colors.
\end{conjecture}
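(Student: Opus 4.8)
The plan is to prove the upper bound $\chi_{fp}(G)\le 10$ by a structural induction that builds up an arbitrary $2$-connected plane graph $G$ while maintaining a facial-parity vertex-coloring on a fixed palette of ten colors. Since $G$ is $2$-connected, every face is bounded by a cycle and $G$ admits an ear decomposition $G = C_0 \cup E_1 \cup \cdots \cup E_t$, starting from a cycle $C_0$ and successively adding ears (internally disjoint paths whose two endpoints already lie in the current graph). Adding an ear splits one bounded face into two, so the inductive invariant to maintain is that at every stage the coloring is facially proper and each bounded face is incident with an odd or zero number of vertices of each color. The base case is a single cycle, where such a coloring on few colors is easy to exhibit; this is essentially the outerplane situation, in which the extremal value $10$ from the two-$C_5$ example already appears.

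The second step is to isolate the role of the palette. I would split the ten colors into a small block guaranteeing facial-properness (consecutive vertices on each face distinct) and a complementary block of parity-correcting colors. By the Four Color Theorem, $G$ has a proper, hence facially proper, coloring with four colors, leaving six colors in reserve. For a face $\alpha$ and color $c$, write $n_{\alpha,c}$ for the number of vertices of color $c$ on the boundary of $\alpha$; once one fixes which colors are permitted to appear on $\alpha$, the requirements ``$n_{\alpha,c}$ odd or zero'' become a linear system over $\mathrm{GF}(2)$ in the recoloring variables. The aim is to show that this system is always solvable within the reserved palette.

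The third step is the parity bookkeeping during ear insertion. When an ear $E_i$ with interior vertices $v_1,\dots,v_r$ is added, it alters the parity contribution to exactly the two faces it bounds. I would color $v_1,\dots,v_r$ so that the parity of each color on both new faces is corrected simultaneously while consecutive colors stay distinct; since an ear meets only two faces, the number of simultaneous parity constraints is bounded, and a greedy assignment along the ear, drawing on the reserved colors as needed, should close the induction. The delicate point is the two endpoints of the ear, which are shared with previously processed faces, so that re-examining or recoloring them can disturb faces already fixed.

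That shared-vertex interaction is where I expect the main obstacle to lie. A vertex lying on several faces contributes its color to the parity count of every incident face at once, so the face-by-face corrections are not independent: a change made to satisfy one face can break a neighbouring one, and controlling this cascade without spending more than ten colors is the crux. The hard part will be designing an inductive invariant, or a discharging argument on the dual, strong enough to guarantee that the accumulated parity debt can always be discharged using only the reserved colors, so that the total palette never exceeds ten even as the corrections propagate across the plane.
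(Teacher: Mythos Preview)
The approach cannot succeed for the simplest possible reason: the statement you are trying to prove is false. In the paper the conjecture is not proved but \emph{refuted}. Theorem~\ref{FPvertex} exhibits, for every $k\ge 3$, a $2$-connected plane graph on $4k$ vertices with $\chi_{fp}(G)=12$; the smallest instance is the line graph of $\Theta_{4,4,4}$ (Figure~\ref{LThetaGraph}). Hence no inductive scheme, ear decomposition, discharging argument, or parity bookkeeping can establish the bound $10$.

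Your own write-up correctly identifies where the argument is fragile: the ``shared-vertex interaction'' when an ear is attached. You say that correcting parities on one face may disturb neighbouring faces, and hope that the cascade can always be absorbed by six reserved colors. It cannot. In the Theta-graph setting the three internally disjoint paths each meet two faces, and the parity argument in the proof of Theorem~\ref{FPedge} shows that no color may appear on two different paths; if a path has length divisible by $4$ it forces four colors by itself. Passing to the line graph converts this into a vertex-coloring obstruction, and three such paths already push the total to $12$. So the inductive invariant you propose is not merely ``delicate'' at the endpoints of an ear---it is provably unmaintainable with a palette of ten colors. The correct response to this conjecture is to look for counterexamples, not a proof.
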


With the following theorem, we prove that there exists an infinite family of $2$-connected plane graphs with $\chi_{fp}(G) = 12$.

\begin{figure}[h!]
\center
\includegraphics[scale=0.8]{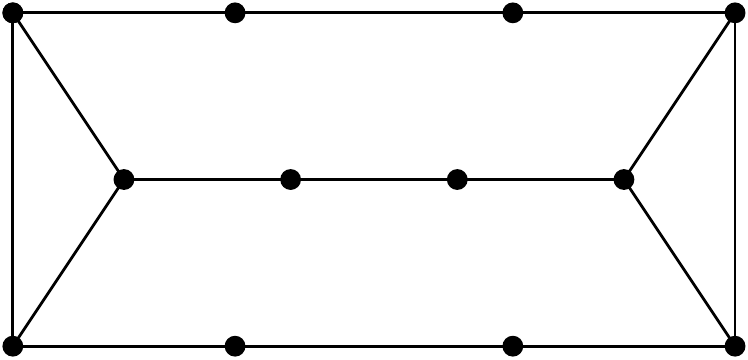}
\caption{The line graph of the graph $\Theta_{4,4,4}$ with $12$ vertices and $\chi_{fp}(G) = 12$.}\label{LThetaGraph}
\end{figure}

\begin{theorem}\label{FPvertex}
For any integer $k\ge 3$, there exists a $2$-edge-connected plane graph $G$ with $4k$ vertices and $\chi_{fp}(G) = 12$.
\end{theorem}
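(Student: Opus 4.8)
The plan is to take $G$ to be the \emph{line graph} of one of the Theta graphs produced in the proof of Theorem~\ref{FPedge}, and to show that for such graphs the facial-parity vertex-coloring parameter of the line graph equals the facial-parity edge-coloring parameter of the base graph; the statement then follows from Theorem~\ref{FPedge}. (The smallest instance, $L(\Theta_{4,4,4})$, is the graph in Fig.~\ref{LThetaGraph}.) For the setup: since $k\ge 3$, choose positive integers $a,b,c$, each divisible by $4$, with $a+b+c=4k$, and let $H=\Theta_{a,b,c}$ carry the plane embedding from the proof of Theorem~\ref{FPedge}, so that $\chi_{fp}'(H)=12$. Put $G=L(H)$ with the embedding induced by that of $H$ (place each vertex of $G$ on the corresponding edge of $H$). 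Then $|V(G)|=|E(H)|=4k$, and $G$ is obtained from two disjoint triangles --- on the three edges of $H$ at each of the two poles --- by joining them with three internally vertex-disjoint paths corresponding to the three internal paths of $H$; in particular $G$ is $2$-connected, hence $2$-edge-connected.

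The crux is to describe the faces of $G$. Using $|E(G)|=\sum_{w\in V(H)}\binom{\deg_H(w)}{2}=4k+3$, Euler's formula gives exactly five faces, and I would check directly that they are: for each face $\alpha_{ij}$ of $H$ a face $\beta_{ij}$ of $G$ whose incident vertices are precisely the edges of $H$ on the boundary walk of $\alpha_{ij}$, that is, the edges of $P_i$ together with those of $P_j$; and for each pole of $H$ a triangular face of $G$ incident with the three edges of $H$ at that pole. Alongside this I would record the \emph{consecutiveness dictionary}: two vertices of $G$ are consecutive on the boundary of a face of $G$ exactly when the corresponding edges of $H$ are consecutive on the boundary of a face of $H$ (at a pole all three edges are pairwise consecutive, being consecutive on $\alpha_{12}$, $\alpha_{13}$, $\alpha_{23}$ respectively).

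Next, identifying $V(G)$ with $E(H)$, I would verify that a coloring $f$ of $V(G)$ is a facial-parity vertex-coloring of $G$ if and only if it is a facial-parity edge-coloring of $H$. Facial-properness transfers in both directions by the consecutiveness dictionary; the parity condition on each $\beta_{ij}$ is literally the parity condition on $\alpha_{ij}$ (same incident set of colors); and the parity condition on a triangular face is automatic in both directions, since a facially-proper coloring gives three distinct colors to the three edges at a pole of $H$, so each color appears there zero or one --- hence an odd number of --- times. This yields a color-number-preserving bijection between the two families of colorings, so $\chi_{fp}(G)=\chi_{fp}'(H)=12$, proving the theorem.

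The one genuinely delicate point is the face correspondence for $L(H)$: fixing the induced plane embedding and confirming that its faces are exactly the three "$H$-face" faces and the two pole-triangles, with no others, and establishing the consecutiveness dictionary (including the degenerate behaviour at the poles). Because $H$ is a Theta graph --- every internal vertex has degree $2$ and lies on exactly two distinct faces, while the two poles have degree $3$ --- this is entirely explicit, and the Euler count $|F(G)|=5$ serves as a sanity check; everything else reduces to Theorem~\ref{FPedge}.
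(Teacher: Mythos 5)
Your proposal is correct and follows essentially the same route as the paper: the paper also takes $G$ to be the line graph of a Theta graph from Theorem~\ref{FPedge} and observes that facial-parity vertex-colorings of $G$ correspond exactly to facial-parity edge-colorings of $H$, concluding $\chi_{fp}(G)=\chi_{fp}'(H)=12$. The only difference is that you spell out the details the paper leaves implicit (the face structure of $L(H)$, the Euler count, and the automatic parity on the pole triangles), all of which check out.
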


\begin{proof}
First observe that the line graphs of the graphs described in the proof of Theorem~\ref{FPedge} are $2$-edge-connected and planar. 
Let $H$ be any Theta graph and $G$ be its line graph (Fig.~\ref{LThetaGraph} represents a particular embedding of a line graph of a graph from Fig.~\ref{ThetaGraph}). It is clear that the number of vertices of $G$ is at least $12$ and divisible by $4$. Observe that any facial-parity vertex coloring of $G$ defines a facial-parity edge-coloring of $H$ and any facial-parity edge-coloring of $H$ defines a facial-parity vertex coloring of $G$. It follows from Theorem~\ref{FPedge} that $\chi_{fp}(G) = 12$.
\end{proof}

\section{Facial $(P_{3},P_{3})$-WORM coloring}

We say that a vertex-coloring of a graph is \textit{rainbow} (e.g. see~\cite{bujtas2,bujtas}), if every pair of its vertices receives distinct colors. On the other hand, we say that a vertex-coloring of a graph is \textit{monochromatic} (e.g. see~\cite{tuza}), if every vertex receives the same color. Given three graphs $G, H$ and $F$, an $(H,F)$-\textit{WORM} coloring of $G$ is a coloring of the vertices of $G$ in such a way, that no subgraph isomorphic to $H$ is rainbow and no subgraph isomorphic to $F$ is monochromatic. The study of WORM colorings was initiated by Voloshin~\cite{voloshin} and they have been extensively studied since (e.g. see~\cite{fworm,survey,worm}). In~\cite{fworm}, the authors studied a special case of $(H,F)$-WORM coloring, namely an $F$-WORM coloring, where $H$ and $F$ are isomorphic. The idea of an $F$-WORM coloring was first introduced by Goddard, Wash, and Xu~\cite{WORMstart,WORMstart2}.\\
\indent Facially constrained WORM colorings were studied in several papers and in~\cite{worm} the authors introduce a \textit{facial} $(P_{k},P_{\ell})$-\textit{WORM coloring}. That is a vertex-coloring of a plane graph $G$, having no rainbow facial $P_{k}$ and no monochromatic facial $P_{\ell}$. Furthermore, they prove that the graph $G$, with $\Delta(G) = 6$, where $\Delta$ denotes the maximum degree of a graph, obtained from the graph in Fig.~\ref{WormGraph} by contracting the edges of the $4$-cycles, incident with the outer face, has no facial $(P_{3},P_{3})$-WORM coloring. They also asked a question whether there exist plane graphs, with maximum degree $4$, having no facial $(P_{3},P_{3})$-WORM coloring. We answer the question in affirmative.

\begin{figure}[h!]
\center
\includegraphics[scale=0.8]{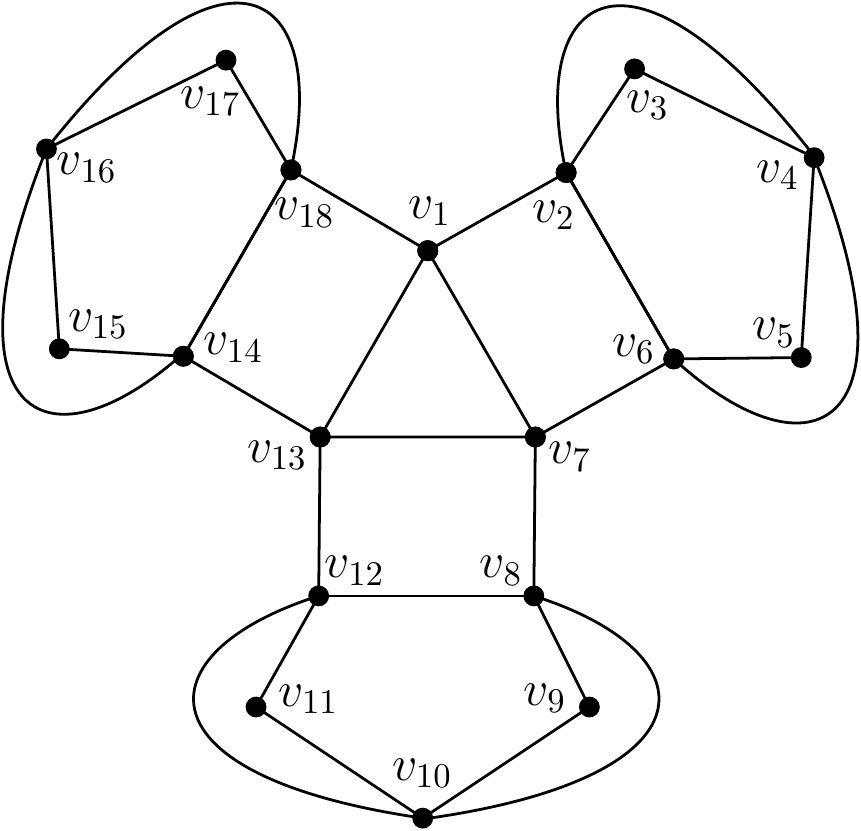}
\caption{A graph with $18$ vertices, with maximum degree $4$, having no facial $(P_{3},P_{3})$-WORM coloring.}\label{WormGraph}
\end{figure}

\begin{theorem}\label{WORM}
For any integer $n\ge 18$, there exists a connected plane graph $G$ on $n$ vertices with $\Delta(G) = 4$, having no facial $(P_{3},P_{3})$-WORM coloring. 
\end{theorem}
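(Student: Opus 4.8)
The plan is to build an explicit plane graph on $18$ vertices, with maximum degree $4$, having no facial $(P_3,P_3)$-WORM coloring, and then extend the construction to every $n \ge 18$ by a local modification that preserves both the degree bound and the non-colorability. For the base case I would take the graph suggested by Figure~\ref{WormGraph}: a central structure surrounded by a ring of $4$-cycles attached to the outer face. The key idea is that forbidding a monochromatic facial $P_3$ forces every monochromatic facial component to be a single vertex or a single edge, while forbidding a rainbow facial $P_3$ forces every facial path on three vertices to repeat a color. On a $4$-regular-ish plane graph these two constraints interact very tightly on each face boundary, and the goal is to exhibit a configuration where they cannot be simultaneously satisfied.

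The main steps I would carry out are as follows. First, I would fix the embedding and enumerate the faces, paying attention to the small faces (triangles and $4$-cycles) since those are where the $(P_3,P_3)$ constraint bites hardest: on a triangular face every two of the three vertices must share a color (else one of the three facial $P_3$'s is rainbow), so a triangular face is either monochromatic or uses exactly two colors in a $2{+}1$ pattern; but a monochromatic triangle contains a monochromatic $P_3$, so every triangular face must be colored with exactly two colors with the repeated color on two adjacent vertices. Second, I would chain these forced two-colorings across shared edges of adjacent triangular faces, propagating the constraint around the central part of the graph, and show that the propagation produces a contradiction — typically that some vertex is forced to two different colors, or that some facial $P_3$ is forced to be rainbow, or that some face is forced to be monochromatic. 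The outer ring of $4$-cycles is there precisely to "close up" the propagation and make the contradiction unavoidable regardless of the colors chosen at the start. Third, having established the claim for $n=18$, I would handle general $n \ge 18$ by subdividing an edge on the outer face (or inserting a pendant path / small gadget on the outer boundary) $n-18$ times; each such operation adds one vertex, keeps the graph connected and plane, does not raise the maximum degree above $4$, and does not create any new facial $P_3$ that could be used to satisfy a constraint that was previously violated — so the enlarged graph still has no facial $(P_3,P_3)$-WORM coloring.

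The hardest part will be the case analysis in the second step: verifying that the forced propagation of two-colorings around the central gadget genuinely has no consistent completion. This is a finite check, but it branches on which of the two colors on each triangular face is the doubled one, and on how colors are identified across the ring of $4$-cycles, so organizing it to avoid an explosion of cases is where the real work lies. I would try to reduce the bookkeeping by first proving a structural lemma — for instance, that in any valid coloring the restriction to a suitable subgraph (the wheel-like core) must be periodic with a small period, or that two designated vertices are forced to receive the same color — and then deriving the contradiction from that lemma in one or two lines. A secondary concern is to double-check that the figure's graph really does have $\Delta = 4$ and is genuinely planar with the claimed face structure, and that the extension gadget in step three does not accidentally admit a facial $(P_3,P_3)$-WORM coloring of the whole graph; both are routine but need to be stated carefully.
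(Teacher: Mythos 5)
Your base-case plan is the same in spirit as the paper's: take the $18$-vertex graph of Fig.~\ref{WormGraph} and show by a finite forcing argument that no facial $(P_3,P_3)$-WORM coloring exists. But for $n=18$ the theorem \emph{is} that finite argument, and you have deferred it entirely ("the hardest part will be the case analysis"). For what it is worth, the check is far shorter than your "propagate two-colorings around the ring and close up" plan suggests: one fixes, without loss of generality, which two vertices of the central triangle $v_1v_7v_{13}$ share a color (say $v_1,v_7$ receive $c_1$); the quadrilateral face $v_1v_2v_6v_7$ then forces $v_2$ and $v_6$ to share a second color $c_2$; and the facial $3$-paths $v_2v_3v_4$, $v_3v_4v_5$, $v_4v_5v_6$ of that single arm force $v_3$ and $v_4$ to share a third color and then leave no admissible color for $v_5$. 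No global propagation, periodicity lemma, or interaction with the outer ring of $4$-cycles is needed, so your worry about case explosion is misplaced --- but until some such chain is actually written down, the $n=18$ case is unproved.

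The passage from $18$ to general $n$ is where your proposal genuinely diverges from the paper and where, as stated, it would fail. Subdividing an edge is not a safe operation here: it turns a triangular face into a quadrilateral and thereby \emph{deletes} the rainbow constraint on those three vertices, and more generally it removes every facial $P_3$ through the subdivided edge. Likewise, attaching a pendant path at a vertex $u$ inside a face $f$ splits the occurrence of $u$ on the boundary walk of $f$ and destroys the facial $P_3$ through $u$ on that face. Note that your stated concern is backwards: newly created facial $P_3$'s only add constraints and cannot help a coloring exist; the real danger is that the modification \emph{destroys} one of the finitely many facial $P_3$'s on which the contradiction rests, after which the enlarged graph may admit a coloring. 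So you must specify exactly where the modification is performed and verify that every face used in the $n=18$ argument survives with its boundary walk intact. The paper does this by inserting an arbitrary connected plane graph $H$ on $n-18$ vertices with $\Delta(H)\le 4$ into a single triangular face not used in the argument and joining it by one edge to the unique vertex of that face of degree less than $4$, so that only one face is disturbed and the degree bound is preserved. Your pendant-path variant can be repaired by the analogous localization and verification; the subdivision variant cannot be repaired without abandoning the operation.
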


\begin{proof}
Let $G$ be a planar graph on $18$ vertices with its planar embedding as given in Fig.~\ref{WormGraph}. Suppose that $G$ admits a facial $(P_{3},P_{3})$-WORM coloring. Note that the vertices $v_{1}$, $v_{7}$ and $v_{13}$ form a face of size $3$. Since there is no rainbow facial path $P_{3}$ and no monochromatic facial path $P_{3}$ in $G$, it follows that exactly two of the three vertices share a color. Without loss of generality, we can assume that $v_{1}$ and $v_{7}$ are colored with the same color $c_{1}$. Now observe that the vertices $v_{1}$, $v_{2}$, $v_{6}$ and $v_{7}$ form a face of size $4$. Since the vertices $v_{1}$ and $v_{7}$ are adjacent and both colored with color $c_{1}$, and there is no monochromatic facial $P_{3}$, it follows that the vertices $v_{2}$ and $v_{6}$ are both colored with a color different from $c_{1}$. We also know that both $v_{2}$ and $v_{6}$ are colored with the same color $c_{2}$, since there is no rainbow facial $P_{3}$ in $G$. Now observe that none of the vertices $v_{3}$, $v_{4}$ or $v_{5}$ is colored with color $c_{2}$, otherwise we would obtain a monochromatic facial $P_{3}$. Let $c_{3}$ be the color of the vertex $v_{3}$. Suppose that the color of the vertex $v_{4}$ is different from $c_{3}$. Then we obtain a rainbow facial $P_{3}$ on the face of size $3$, formed by vertices $v_{2}$, $v_{3}$ and $v_{4}$, a contradiction. Thus, $v_{4}$ must be colored with the color $c_{3}$. Consider now the vertex $v_{5}$. If $v_{5}$ is colored with the color $c_{3}$, then we obtain a monochromatic facial $P_{3}$, formed by the vertices $v_{3}$, $v_{4}$ and $v_{5}$. It follows that $v_{5}$ must receive a color different from $c_{2}$ and $c_{3}$, say $c_{4}$, but then we obtain a rainbow facial $P_{3}$, formed by the vertices $v_{4}$, $v_{5}$ and $v_{6}$, a contradiction.\\
\indent If $n>18$, then take the graph $G$ and any connected planar graph $H$, with $\Delta(H) \le 4$, having a planar embedding such that the outer face contains a vertex $v$ of degree at most $3$. Then place $H$ in any face $\alpha$ of size $3$, distinct from the face formed by the vertices $v_{1}$, $v_{7}$ and $v_{12}$, and add the edge from $v$ to the only vertex of the face $\alpha$, with degree less than $4$.
\end{proof}

\section{Conclusion}

Examples presented in the previous sections provide new bounds for their corresponding coloring problems. New results prove that we need at least $12$ colors for facial-parity edge-coloring of any $2$-edge-connected planar graph. The current upper bound however remains $16$, and hence there is still a gap of $4$ colors between the two bounds. On the other hand, for the vertex version, even though we presented an example which increases the lower bound by two, there is still a huge gap between this new lower bound of $12$ and currently best known upper bound of $97$.\\
\indent In regards with facial WORM vertex-coloring of plane graphs, it is known that not all plane graphs have a $(P_{3},P_{3})$-WORM coloring. Czap, Jendro\v{l} and Valiska~\citep{worm} presented some results about a $(P_{3},P_{4})$-WORM coloring, yet their conjecture that every connected plane graph admits a $(P_{3},P_{4})$-WORM coloring with $2$ colors remains open.

\bibliographystyle{plain}
{\small
	\bibliography{MainBase}
}

\end{document}